\newcommand{\A}{\mathcal{A}}
\newcommand{\F}{\mathcal{F}}
\newcommand{\T}{\mathcal{T}}
\newcommand{\R}{\mathbb{R}}
\newcommand{\C}{\mathbb{C}}
\newcommand{\B}{\mathbb{B}}
\newtheorem{mainthm}{Main Theorem}
\newtheorem{theorem}{Theorem}[section]
\newtheorem{lemma}[theorem]{Lemma}
\newtheorem{proposition}[theorem]{Proposition}
\theoremstyle{definition}
\theoremstyle{remark}
\numberwithin{equation}{section}
\begin{document}

\title[Toeplitz operators and Lagrangian foliations]{Commutative
  algebras of Toeplitz operators and Lagrangian foliations}
\author{R. Quiroga-Barranco} 
\address{Centro de Investigaci\'on en Matem\'aticas,
Apartado Postal 402, 36000, Guanajuato, Guanajuato, M\'exico.}
\email{quiroga@cimat.mx} 
\dedicatory{To Nikolai Vasilevski on the occasion of his 60th birthday}
\thanks{Research supported by CONACYT, CONCYTEG and SNI}
\keywords{Toeplitz operators, Lagrangian submanifolds, foliations}
\subjclass[2000]{47B35, 32M10, 57R30, 53D05}

\maketitle

\begin{abstract}
  Let $D$ be a homogeneous bounded domain of $\C^n$ and $\A$ a set of
  (anti--Wick) symbols that defines a commutative algebra of Toeplitz
  operators on every weighted Bergman space of $D$. We prove that if
  $\A$ is rich enough, then it has an underlying geometric structure
  given by a Lagrangian foliation.
\end{abstract}

\section{Introduction}

In recent work, Vasilevski and his collaborators discovered unexpected
commutative algebras of Toeplitz operators acting on weighted Bergman
spaces on the unit disk (see \cite{Vas2001} and \cite{Vas2003}). It
was even possibly to classify all such algebras as long as they are
assumed commutative for every weight and suitable richness conditions
hold; the latter ensure that the (anti--Wick) symbols that define the
Toeplitz operators have enough infinitesimal linear independence. We
refer to \cite{GQV05} for further details.

Latter on, it was found out that the phenomenon of the existence of
nontrivial commutative algebras of Toeplitz operators extends to the
unit ball $\B^n$ in $\C^n$. There exists at least $n+2$ nonequivalent
such commutative algebras which were exhibited explicitly in
\cite{QV07} and \cite{QV08}. The discovery of such commutative
algebras of Toeplitz operators on $\B^n$ was closely related to a
profound understanding of the geometry of this domain. The description
of these commutative algebras of Toeplitz operators involved
Lagrangian foliations, i.e.~by Lagrangian submanifolds (see
Section~\ref{sec:geom} for detailed definitions), with distinguished
geometric properties.

In order to completely understand the commutative algebras of Toeplitz
operators on $\B^n$ and other domains, it is necessary to determine
the general features that produce such algebras. In particular, there
is the question as to whether or not the commutative algebras of
Toeplitz operators have always a geometric origin. More precisely,
whether or not they are always given by a Lagrangian foliation. 

The main goal of this paper is to prove that, on a homogeneous domain,
commutative algebras of Toeplitz operators are always obtained from
Lagrangian foliations. This is so at least when the commutativity
holds on every weighted Bergman space and a suitable richness
condition on the symbols holds. To state this claim we use the
following notation. We denote by $C^\infty(M)$ the space of smooth
complex-valued functions on a manifold $M$ and by $C^\infty_b(M)$ the
subspace of bounded functions. For a given foliation $\F$ of a
manifold $M$, we will denote by $\A_\F(M)$ the vector subspace of
$C^\infty(M)$ that consists of those functions which are constant
along every leaf of $\F$. Our main result is the following.

\begin{mainthm}
  Let $D$ be bounded domain in $\C^n$ and $\A$ a vector subspace of
  $C^\infty_b(D)$ which is closed under complex conjugation. If for
  every $h \in (0,1)$ the Toeplitz operator algebra $\T_h(\A)$, acting
  on the weighted Bergman space $\A_h^2(D)$, is commutative and $\A$
  satisfies the following richness condition:
  \begin{itemize}
  \item for some closed nowhere dense subset $S \subseteq D$ and for
    every $p \in D \setminus S$ there exist real-valued elements $a_1,
    \dots, a_n \in \A$ such that $d{a_1}_p, \dots, d{a_n}_p$ are
    linearly independent over $\R$,
  \end{itemize}
  then, there is a Lagrangian foliation $\F$ of $D \setminus S$ such
  that $\A|_{D\setminus S} \subseteq \A_\F(D\setminus S)$. In other
  words, every element of $\A$ is constant along the leaves of $\F$.
\end{mainthm}

To obtain this result we prove the following characterization of
spaces of functions which define commutative algebras with respect to
the Poisson brackets in a symplectic manifold.

\begin{theorem}\label{thm:lag-fol}
  Let $M$ be a $2n$-dimensional symplectic manifold and $\A$ a vector
  subspace of $C^\infty(M)$ which is closed under complex conjugation.
  Suppose that the following conditions are satisfied:
  \begin{enumerate}
  \item $\A$ is a commutative algebra for the Poisson brackets of $M$,
    i.e.~$\{a,b\} = 0$ for every $a,b \in \A$, and
  \item for every $p \in M$ there exist real-valued elements $a_1,
    \dots, a_n \in \A$ 
    such that $d{a_1}_p, \dots, d{a_n}_p$ are linearly independent
    over $\R$.
  \end{enumerate}
  Then, there is a Lagrangian foliation $\F$ of $M$ such that $\A
  \subseteq \A_\F(M)$.
\end{theorem}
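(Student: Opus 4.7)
The plan is to construct the foliation $\F$ as the integral foliation of the distribution $D$ spanned pointwise by the Hamiltonian vector fields $X_a$ associated to real-valued elements $a \in \A$. Recall that $X_a$ is defined by $\omega(X_a, \cdot) = da$ and that $\{a,b\} = \omega(X_a, X_b) = X_a(b)$.

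First, I would verify that $D$ is a smooth Lagrangian distribution of rank $n$. For any real $a, b \in \A$, hypothesis (1) gives $\omega(X_a, X_b) = \{a,b\} = 0$, so every fiber of $D$ is isotropic and thus of dimension at most $n$. On the other hand, since $a \mapsto X_a$ is the pointwise composition of $d$ with the musical isomorphism induced by $\omega$, hypothesis (2) implies that at every $p$ the fiber $D_p$ has dimension at least $n$. Hence $D_p$ is Lagrangian everywhere. To see smoothness, I would fix $p$ and real $a_1, \dots, a_n \in \A$ with $d{a_i}_p$ linearly independent; by continuity the Hamiltonian fields $X_{a_1}, \dots, X_{a_n}$ remain pointwise linearly independent on some neighborhood $U$ of $p$, and by the rank count they already span $D|_U$.

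Next, I would check involutivity. For real $a,b \in \A$ we have $[X_a, X_b] = X_{\{a,b\}} = X_0 = 0$, so since $D$ is locally framed by such Hamiltonian vector fields, $D$ is involutive. Frobenius' theorem then produces a foliation $\F$ of $M$ tangent to $D$, and the pointwise Lagrangian property of $D$ promotes $\F$ to a Lagrangian foliation.

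Finally, to obtain $\A \subseteq \A_\F(M)$, let $a \in \A$ be arbitrary. Because $\A$ is closed under complex conjugation, both $\mathrm{Re}(a) = (a+\overline{a})/2$ and $\mathrm{Im}(a) = (a-\overline{a})/(2i)$ lie in $\A$, so it suffices to treat real-valued $a$. For every real $b \in \A$ we have $X_b(a) = \{b,a\} = 0$, and since $D$ is spanned locally by such $X_b$'s, $a$ is constant along each leaf of $\F$. The main technical obstacle is ensuring that $D$ has constant rank $n$ as a smooth distribution: the upper bound from isotropy and the lower bound from the richness condition (2) must be combined to force equality and to exhibit a smooth local frame, which is the crux of the argument.
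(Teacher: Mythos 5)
Your proposal is correct and follows essentially the same route as the paper: build the distribution spanned by the Hamiltonian fields of elements of $\A$, use isotropy plus the richness hypothesis to get a smooth Lagrangian $n$-distribution, integrate via Frobenius, and read off constancy along leaves from $\{a,b\}=0$. The only cosmetic difference is your explicit reduction to real-valued symbols using closure under conjugation, which the paper leaves implicit.
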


This together with Berezin's correspondence principle (see
Section~\ref{sec:Ber}) allows us to prove the Main Theorem.

We note that it is a known fact that for a Lagrangian foliation $\F$
in a symplectic manifold $M$, the vector subspace $\A_\F(M)$ is a
commutative algebra with respect to the Poisson brackets. More
precisely we have the following result.

\begin{proposition}\label{prop:abelian}
  If $\F$ is a Lagrangian foliation of a symplectic manifold $M$,
  then:
  $$
  \{a,b\} = 0,
  $$
  for every $a,b \in \A_\F(M)$.
\end{proposition}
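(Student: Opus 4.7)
The plan is to reduce the computation of $\{a,b\}$ to the defining property of a Lagrangian subspace by showing that the Hamiltonian vector fields of elements of $\A_\F(M)$ are tangent to the leaves of $\F$.

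First I would fix $a \in \A_\F(M)$ and let $X_a$ denote its Hamiltonian vector field, characterized by $\iota_{X_a}\omega = da$. Since $a$ is (complex valued and) constant along each leaf $L$ of $\F$, the differential $da$ vanishes on $TL$; that is, $da_p(v)=0$ for every $p \in L$ and every $v \in T_pL$. Translating via the symplectic form, this says $\omega_p(X_a,v)=0$ for all $v\in T_pL$, i.e.\ $X_a(p)\in (T_pL)^{\omega}$, the symplectic orthogonal complement of $T_pL$.

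The key step is the Lagrangian hypothesis. Because $L$ is a Lagrangian submanifold, $T_pL$ is a Lagrangian subspace of $T_pM$, so $(T_pL)^{\omega}=T_pL$. Consequently $X_a(p)\in T_pL$; thus $X_a$ is everywhere tangent to the leaves of $\F$. The same reasoning applied to $b\in \A_\F(M)$ shows that $X_b$ is also everywhere tangent to the leaves of $\F$.

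Finally I would combine these facts with the isotropic half of the Lagrangian condition: for any $p\in M$, letting $L$ be the leaf through $p$, both $X_a(p)$ and $X_b(p)$ lie in the Lagrangian (hence isotropic) subspace $T_pL$, so
\[
\{a,b\}(p)=\omega_p(X_a,X_b)=0.
\]
The only real content of the argument is recognizing that Lagrangian means self-orthogonal, so the two uses of the Lagrangian condition (to move $X_a$ into $T_pL$, and then to kill $\omega(X_a,X_b)$) are essentially the same fact; there is no genuine obstacle. A minor bookkeeping point is that $\A_\F(M)$ is by definition a space of complex-valued functions, but the identity $\{a,b\}=\omega(X_a,X_b)$ extends $\C$-bilinearly, so the complex case follows immediately from the real case.
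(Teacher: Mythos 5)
Your proof is correct and follows essentially the same route as the paper: show $da$ annihilates $T\F$, use the Lagrangian (maximal isotropic) condition to conclude $X_a$ is tangent to the leaves, and then evaluate $\{a,b\}$ on the isotropic leaf tangent space. The only cosmetic difference is that the paper concludes with $\{a,b\}=da(X_b)=0$ rather than $\omega_p(X_a,X_b)=0$, which is the same computation.
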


Hence, Theorem~\ref{thm:lag-fol} can also be thought of as a converse of
Proposition~\ref{prop:abelian}.

\section{Preliminaries on symplectic geometry and foliations}
\label{sec:geom}
The goal of this section is to establish some notation and state some
very well known results on symplectic geometry and foliations.

For the next remarks on symplectic geometry we refer to
\cite{McDuff-Salamon} for further details.

Let $M$ be a symplectic manifold with symplectic form $\omega$. Being
nondegenerate, the symplectic form $\omega$ defines an isomorphism
between the tangent and cotangent spaces. More precisely, we have the
following elementary fact.

\begin{lemma}\label{lem:iso-omega}
  For every $p \in M$ the map:
  \begin{eqnarray*}
    T_p M &\rightarrow& T_p^* M \\
    v &\rightarrow& \omega(v,\cdot),
  \end{eqnarray*}
  is an isomorphism of vector spaces.
\end{lemma}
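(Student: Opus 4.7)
The plan is to show the map is linear, injective, and then invoke equality of finite dimensions. Linearity in $v$ is immediate from bilinearity of $\omega_p$ as a $2$-form on $T_pM$; this is a routine observation that I would state but not dwell on.

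For injectivity, I would unpack what it means for the map to send $v \in T_pM$ to zero: it means $\omega_p(v,w) = 0$ for every $w \in T_pM$. The definition of a symplectic form includes nondegeneracy of $\omega_p$ at every point, which is precisely the statement that the only $v$ with $\omega_p(v,\cdot) \equiv 0$ is $v=0$. So the kernel is trivial.

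Finally, since $M$ is a symplectic manifold of dimension $2n$, both $T_pM$ and $T_p^*M$ have dimension $2n$, and an injective linear map between finite-dimensional vector spaces of equal dimension is an isomorphism.

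I do not anticipate any real obstacle; the only ``content'' here is the nondegeneracy of $\omega$, which is part of the definition of a symplectic form. The step that deserves being stated explicitly is the translation of ``nondegenerate'' into ``the induced map $v \mapsto \omega_p(v,\cdot)$ has trivial kernel'', since this is exactly the formulation the rest of the paper will use.
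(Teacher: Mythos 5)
Your proof is correct and is exactly the standard argument the paper has in mind: it states this lemma without proof as an "elementary fact," attributing the isomorphism precisely to the nondegeneracy of $\omega$, which is the one substantive ingredient you identify. Nothing is missing.
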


This remark allows us to construct vector fields associated to
$1$-forms. In particular, for a complex-valued smooth function $f$
defined over $M$, we define the Hamiltonian field associated to $f$ as
the smooth vector field over $M$ that satisfies the identity:
\begin{equation*}
  df(X) = \omega(X_f,X),
\end{equation*}
for every vector field $X$ over $M$. The Poisson brackets of two
complex-valued smooth functions $f,g$ over $M$ is then given as the
smooth function:
\begin{equation*}
  \{f,g\} =\omega(X_f,X_g) = df(X_g).
\end{equation*}
The following well known result relates the Poisson brackets on smooth
functions to the Lie brackets of vector fields.

\begin{lemma}\label{lem:Lie}
  The Poisson brackets define a Lie algebra structure on the space
  $C^\infty(M)$. Also, we have the identity:
  \begin{equation*}
    [X_f,X_g] = X_{\{f,g\}},
  \end{equation*}
  for every $f,g \in C^\infty(M)$. In particular, the assignment:
  \begin{equation*}
    f \mapsto X_f
  \end{equation*}
  is an isomorphism of Lie algebras onto the Lie algebra of
  Hamiltonian fields.
\end{lemma}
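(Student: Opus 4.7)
My plan is to verify the three assertions in turn: bilinearity, skew-symmetry and the Jacobi identity for $\{\cdot,\cdot\}$; the bracket formula $[X_f,X_g]=X_{\{f,g\}}$; and the Lie algebra isomorphism onto the Hamiltonian fields. Bilinearity and skew-symmetry of $\{f,g\}=\omega(X_f,X_g)$ are immediate from the bilinearity and skew-symmetry of $\omega$, combined with the $\C$-linearity of the assignment $f\mapsto X_f$ (which follows pointwise from Lemma~\ref{lem:iso-omega}). The real content lies in the bracket formula, from which the Jacobi identity will be deduced essentially formally.

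To prove $[X_f,X_g]=X_{\{f,g\}}$, the key tool will be Cartan's magic formula $\mathcal{L}_X=d\iota_X+\iota_X d$ together with the closedness $d\omega=0$. First, since $\iota_{X_f}\omega=df$ is closed, one obtains $\mathcal{L}_{X_f}\omega=d\iota_{X_f}\omega+\iota_{X_f}d\omega=0$, so every Hamiltonian field preserves $\omega$. Applying the graded commutation identity $\iota_{[X,Y]}=[\mathcal{L}_X,\iota_Y]$ to $\omega$ then yields
\[
\iota_{[X_f,X_g]}\omega=\mathcal{L}_{X_f}\iota_{X_g}\omega-\iota_{X_g}\mathcal{L}_{X_f}\omega=\mathcal{L}_{X_f}(dg)=d\bigl(X_f(g)\bigr),
\]
and $X_f(g)=dg(X_f)=\omega(X_g,X_f)$ agrees with $\{f,g\}$ up to the sign convention. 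Hence $\iota_{[X_f,X_g]}\omega=\iota_{X_{\{f,g\}}}\omega$, and pointwise injectivity of $v\mapsto\omega(v,\cdot)$ from Lemma~\ref{lem:iso-omega} gives the desired identity of vector fields.

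With the bracket formula established, the Jacobi identity for $\{\cdot,\cdot\}$ becomes a direct computation: writing $\{a,\{b,c\}\}=X_{\{b,c\}}(a)=[X_b,X_c](a)=X_b(X_c(a))-X_c(X_b(a))=\{\{a,c\},b\}-\{\{a,b\},c\}$ and applying skew-symmetry to each term, one assembles the three cyclic permutations into $\{a,\{b,c\}\}+\{b,\{c,a\}\}+\{c,\{a,b\}\}=0$. For the isomorphism claim, the assignment $f\mapsto X_f$ is $\C$-linear, is a Lie algebra homomorphism by the bracket formula, and surjects by definition onto the Hamiltonian fields; its kernel consists of the $f$ with $df=0$, i.e.~the locally constant functions (again by Lemma~\ref{lem:iso-omega}), and the isomorphism statement should be read modulo this kernel. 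The only nonformal step, and thus the main obstacle, is the Cartan-formula computation of $\iota_{[X_f,X_g]}\omega$; everything else is bookkeeping with the pointwise isomorphism $T_pM\cong T_p^*M$.
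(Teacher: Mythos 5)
The paper itself offers no proof of this lemma: it is quoted as a ``well known result'' (the implicit reference being McDuff--Salamon), so there is no internal argument to compare yours against. Your route is the standard textbook one and its skeleton is sound: Cartan's formula together with $d\omega=0$ gives $\mathcal{L}_{X_f}\omega=0$, the commutator identity $\iota_{[X,Y]}=[\mathcal{L}_X,\iota_Y]$ then exhibits $\iota_{[X_f,X_g]}\omega$ as an exact form, and the pointwise isomorphism of Lemma~\ref{lem:iso-omega} converts that back into an identity of vector fields, after which the Jacobi identity and the homomorphism property are formal. Your observation that ``isomorphism'' must be read modulo the locally constant functions is also correct and worth keeping.

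The one place you should not wave your hands is the sign, because with this paper's conventions it does not come out neutral. Here $\iota_{X_f}\omega=df$ and $\{f,g\}=\omega(X_f,X_g)=dg$ evaluated nowhere --- rather, $\{f,g\}=df(X_g)=X_g(f)$. Your computation gives $\iota_{[X_f,X_g]}\omega=d\bigl(X_f(g)\bigr)=d\{g,f\}=-d\{f,g\}$, hence $[X_f,X_g]=X_{\{g,f\}}=-X_{\{f,g\}}$, i.e.\ $f\mapsto X_f$ is an \emph{anti}-homomorphism for these conventions (one can confirm this on $\R^2$ with $\omega=dq\wedge dp$, $f=q^2/2$, $g=p$). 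The statement as printed in the paper carries the same convention-dependent sign, and nothing in the rest of the paper is affected since only the vanishing of brackets is ever used; but in your write-up the unresolved sign actually breaks the Jacobi computation. As displayed, $\{a,\{b,c\}\}=\{\{a,c\},b\}-\{\{a,b\},c\}$ together with skew-symmetry yields $\{a,\{b,c\}\}-\{b,\{c,a\}\}-\{c,\{a,b\}\}=0$, which is not the cyclic Jacobi identity and cannot hold identically (summing it with its cyclic permutation forces all double brackets to vanish). With the corrected bracket formula one gets instead $\{a,\{b,c\}\}=[X_c,X_b](a)=\{\{a,b\},c\}-\{\{a,c\},b\}$, and the three cyclic terms then do sum to zero. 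So: right method, correct key computation, but you must fix one sign convention at the outset (e.g.\ adopt $\{f,g\}=X_f(g)$, or state the bracket formula as $[X_f,X_g]=X_{\{g,f\}}$) and carry it through the Jacobi step.
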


Another important object in our discussion is given by the notion of a
foliation $\F$ of a manifold $M$. This is given as a decomposition
into connected submanifolds which is locally given by submersions.
More precisely, we define a foliated chart for $M$ as a smooth
submersion $\varphi : U \subseteq M \rightarrow \R^k$ from an open
subset of $M$ onto an open subset of $\R^k$. Given two such foliated
charts $\varphi, \psi$, defined on open subsets $U,V$ respectively, we
will say that they are compatible if there is a smooth diffeomorphism
$\xi : \varphi(U\cap V) \rightarrow \psi(U\cap V)$ such that
$\xi\circ\varphi = \psi$ on $U\cap V$. A foliated atlas for $M$ is a
family of compatible foliated charts whose domains cover $M$; note
that we also need $k$ above to be the same for all the foliated
charts. For any such foliated atlas, we define the plaques as the
connected components of the fibers of its foliated charts. With these
plaques we define the following equivalence relation in $M$:
\begin{eqnarray*}
          x\sim y &\iff& \mbox{there is a sequence of plaques }
                                (P_j)_{j=0}^l  \\
                        &&      \mbox{of the foliated atlas, such that }
                                x\in P_0,\ y\in P_l,   \\
                        &&      \mbox{and }
                                P_{j-1}\cap P_j\neq\phi \mbox{ for every }
                                j=1,\dots,l
\end{eqnarray*}
The equivalence classes of such an equivalence relation are called the
leaves of the foliation, which are easily seen to be submanifolds of
$M$. For further details on this definition and some of its
consequences and properties we refer to \cite{Candel-Conlon}. 

Finally, a foliation $\F$ in a symplectic manifold $M$ is called
Lagrangian if its leaves are Lagrangian submanifolds of $M$.

\section{Berezin's correspondence principle for bounded domains}
\label{sec:Ber}
In the rest of this section $D$ denotes a homogeneous bounded domain of
$\C^n$. We now recollect some facts on the analysis of $D$ leading to
Berezin's correspondence principle; we refer to \cite{Berezin} for
further details.

For every $h \in (0,1)$, let us denote by $\A_h^2(D)$ the weighted
Bergman space defined as the closed subspace of holomorphic functions
in $L^2(D,d\mu_h)$. Here, $d\mu_h$ denotes the weighted volume element
obtained from the Bergman kernel. If we denote by $B_D^{(h)} :
L^2(D,d\mu_h) \rightarrow \A_h^2(D)$ the orthogonal projection, then
for every $a \in L^\infty(D,d\mu_h)$  the Toeplitz operator
$T_a^{(h)}$ with (anti--Wick) symbol $a$ is given by the assignment:
\begin{eqnarray*}
  T_a^{(h)} : \A_h^2(D) &\rightarrow& \A_h^2(D) \\
  \varphi &\mapsto& B_D^{(h)}(a\varphi).
\end{eqnarray*}
For any such (anti--Wick) symbol $a$ and its associated Toeplitz
operator $T_a^{(h)}$, Berezin \cite{Berezin} constructed a (Wick)
symbol $\widetilde{a}_h : D \times \overline{D} \rightarrow \C$
defined so that the relation:
\begin{equation*}
  T_a^{(h)}(\varphi)(z) = 
  \int_D \widetilde{a}_h(z,\overline{\zeta})\varphi(\zeta)
  F_h(\zeta,\overline{\zeta}) d\mu(\zeta),
\end{equation*}
holds for every $\varphi \in \A_h^2(D)$, where $d\mu$ is the
(weightless) Bergman volume and $F_h$ is a suitable kernel defined in
terms of the Bergman kernel and depending on $h$.  This provides the
means to describe the algebra of Toeplitz operators as a suitable
algebra of functions. To achieve this, one defines a $*$-product of
two Wick symbols $\widetilde{a}_h, \widetilde{b}_h$ as the symbol
given by:
\begin{equation*}
  (\widetilde{a}_h * \widetilde{b}_h)(z,\overline{z}) =
  \int_D \widetilde{a}_h(z,\overline{\zeta})
  \widetilde{b}_h(\zeta,\overline{z})
  G_h(\zeta,\overline{\zeta},z,\overline{z}) d\mu(\zeta),
\end{equation*}
where $G_h$ is again some kernel defined in terms of the Bergman
kernel and depending on $h$. We denote by $\widetilde{\A}_h$ the
vector space of Wick symbols associated to anti--Wick symbols that
belong to $C^\infty_b(D)$. Then $\widetilde{\A}_h$ can be considered
as an algebra for the $*$-product defined above.

Berezin's correspondence principle is then stated as follows.

\begin{theorem}[Berezin \cite{Berezin}]\label{thm:Ber}
  Let $D$ be a homogeneous bounded domain of $\C^n$. Then, the map
  given by:
  \begin{eqnarray*}
    \T_h(C^\infty_b(D)) &\rightarrow& \widetilde{\A}_h \\
    T_a^{(h)} &\mapsto& \widetilde{a}_h
  \end{eqnarray*}
  is an isomorphism of algebras, where $\T_h(C^\infty_b(D))$ is the
  Toeplitz operator algebra defined by bounded smooth symbols.
  Furthermore, the following correspondence principle is satisfied:
  \begin{equation*}
    (\widetilde{a}_h * \widetilde{b}_h - \widetilde{b}_h *
    \widetilde{a}_h)(z,\overline{z}) = 
    ih\{a,b\}(z) + O(h^2),
  \end{equation*}
for every $h \in (0,1)$ and every $a,b \in C^\infty_b(D)$.
\end{theorem}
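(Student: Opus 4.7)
The theorem is the classical result of Berezin, and my plan is to follow his original two-step strategy: first establish the algebra isomorphism between $\T_h(C^\infty_b(D))$ and $\widetilde{\A}_h$, then extract the semiclassical commutator formula from the small-$h$ asymptotics of the kernel $G_h$.

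For the isomorphism I would introduce the Wick symbol intrinsically through the reproducing kernel $K^{(h)}(z,\bar w)$ of $\A_h^2(D)$, by setting
\[
\widetilde{a}_h(z,\bar w) = \frac{\langle T_a^{(h)} K^{(h)}_w, K^{(h)}_z\rangle_h}{K^{(h)}(z,\bar w)},
\]
where $K^{(h)}_w(\zeta) = K^{(h)}(\zeta,\bar w)$. This produces a function holomorphic in $z$ and antiholomorphic in $w$, and because the coherent states $\{K^{(h)}_w\}_{w\in D}$ have dense linear span in $\A_h^2(D)$, the assignment $T_a^{(h)} \mapsto \widetilde{a}_h$ is injective. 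The integral representation of $T_a^{(h)}$ in the statement is then checked by rewriting $B_D^{(h)}(a\varphi)(z)$ against the reproducing property, absorbing the ratio between $d\mu_h$ and $d\mu$ and the diagonal kernel value $K^{(h)}(\zeta,\bar\zeta)$ into the factor $F_h$. Composition $T_a^{(h)} T_b^{(h)}$ then corresponds to composing two such integral representations, and reading off the resulting kernel as a convolution yields both the $*$-product and the weight $G_h$; in particular $\widetilde{T_a^{(h)} T_b^{(h)}} = \widetilde{a}_h * \widetilde{b}_h$, so the map is a homomorphism. Surjectivity onto $\widetilde{\A}_h$ is by definition of the latter.

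For the correspondence principle the input is the semiclassical behaviour of $G_h$: as $h\to 0$, $G_h(\zeta,\bar\zeta,z,\bar z)$ concentrates exponentially on the diagonal $\zeta = z$, with leading contribution a Gaussian whose covariance is the inverse of the Bergman (Kähler) metric at $z$. I would Taylor expand $\widetilde{a}_h(z,\bar\zeta)$ and $\widetilde{b}_h(\zeta,\bar z)$ around $\zeta = z$ in holomorphic and antiholomorphic directions separately, perform the Gaussian integral over $T_z D$, and collect powers of $h$. The $h^0$ terms in both orderings equal $a(z)b(z)$ and cancel in the commutator; the $h^1$ terms are contractions of $\partial a \otimes \bar\partial b$ and $\partial b \otimes \bar\partial a$ against the inverse Kähler form, whose antisymmetric combination is, by Lemma~\ref{lem:iso-omega} and the definition of Hamiltonian vector fields, precisely $i\omega(X_a,X_b) = i\{a,b\}$.

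The main obstacle is uniform analytic control of this expansion on a general homogeneous bounded domain, as opposed to a domain like $\B^n$ where the Bergman kernel has a closed form. My plan is to exploit the transitivity of $\mathrm{Aut}(D)$ to reduce the stationary-phase calculation to a single base point, together with exponential off-diagonal decay of $K^{(h)}$ in the Bergman distance to justify truncating the integrals to an arbitrarily small neighborhood of the diagonal with error smaller than any power of $h$. This uniformity, rather than the formal leading-order computation, is the delicate step, and it is exactly where the homogeneity hypothesis is used in an essential way.
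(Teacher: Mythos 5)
The paper does not prove this statement: it is quoted verbatim from Berezin \cite{Berezin} as an external input, and the only role it plays here is to let the author transfer commutativity of $\T_h(\A)$ to commutativity of $\A$ under the Poisson bracket. So there is no in-paper proof to compare your argument against; what you have written is a reconstruction of Berezin's own argument, and at the level of strategy it is the correct one: Wick symbol via coherent states $\widetilde{a}_h(z,\bar w)=\langle T_a^{(h)}K^{(h)}_w,K^{(h)}_z\rangle_h/K^{(h)}(z,\bar w)$, injectivity from density of the coherent states, the $*$-product read off from composition of integral kernels, and the commutator asymptotics from Laplace-type concentration of $G_h$ on the diagonal.

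Two points in your sketch deserve to be flagged as more than routine. First, the Wick symbol $\widetilde{a}_h$ is itself $h$-dependent (it is the Berezin transform of $a$), so before Taylor-expanding it around $\zeta=z$ you need its own expansion $\widetilde{a}_h(z,\bar z)=a(z)+h(\Delta a)(z)+O(h^2)$; the $h$-order corrections do enter the product $\widetilde{a}_h*\widetilde{b}_h$ at order $h$, and one must check that they appear symmetrically in $a$ and $b$ so that they cancel in the commutator, leaving only the antisymmetric contraction that gives $i\{a,b\}$. Your write-up silently assumes the $h^0$ term of $\widetilde{a}_h$ is exactly $a$. Second, as you yourself note, the uniform control of the remainder $O(h^2)$ on a general homogeneous bounded domain is the real content of the theorem, and "reduce to a base point by transitivity of $\mathrm{Aut}(D)$ plus off-diagonal decay of $K^{(h)}$" is a plan rather than a proof; this is precisely the part Berezin's paper supplies and that cannot be reconstructed from formal stationary phase alone. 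As a blind proof your text is therefore an accurate and well-organized outline of the known argument, but not a self-contained proof; for the purposes of this paper, citing \cite{Berezin} as the author does is the appropriate resolution.
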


\section{Proofs of the main results}

For the sake of completeness, we present here the proof of
Proposition~\ref{prop:abelian}.

\begin{proof}[Proof of Proposition~\ref{prop:abelian}]
  For a given $a \in \A_\F(M)$, the condition of $a$ being constant
  along the leaves of $\F$ implies that:
  \begin{equation*}
   \omega(X_a,X) = da(X) = 0,
  \end{equation*}
  for every smooth vector field $X$ tangent to $\F$. Since the
  foliation $\F$ is Lagrangian, at every $p \in M$ the space $T_p\F$
  is a maximal isotropic subspace for $\omega$ and so the above
  identity shows that $(X_a)_p$ belongs to $T_p\F$. Hence, for every
  $a \in \A_\F(M)$ the vector field $X_a$ is tangent to the foliation
  $\F$. From this we conclude that:
  \begin{equation*}
    \{a,b\} = da(X_b) = 0,
  \end{equation*}
for every $a,b \in \A_\F(M)$.
\end{proof}

We now prove our result on commutative Poisson algebras and Lagrangian
foliations.

\begin{proof}[Proof of Theorem \ref{thm:lag-fol}]
  Let us consider a subspace $\A$ of $C^\infty(M)$ as in the
  hypotheses of Theorem \ref{thm:lag-fol}. For every $p \in M$ define
  the vector subspace of $T_pM$ given by:
  \begin{equation*}
    E_p = \{(X_a)_p : a \in \A\}.
  \end{equation*}
  We will now prove that $E = \cup_{p \in M} E_p$ is a smooth
  $n$-distribution over $M$; in other words, that in a neighborhood of
  every point the fibers of $E$ are spanned by $n$ smooth vector
  fields which are pointwise linearly indepent in such neighborhood.

  First note that since the assignment $a \mapsto X_a$ is linear,
  every set $E_p$ is a subspace of $T_pM$. Furthermore, being $\A$
  commutative for the Poisson brackets, it follows that:
  \begin{equation*}
   \omega(X_a,X_b) = \{a,b\} = 0,
  \end{equation*}
  for every $a,b \in \A$. In particular, $E_p$ is an isotropic
  subspace for $\omega$ and so has dimension at most $n$. 

  On the other hand, for every $p \in M$ we can choose smooth
  functions $a_1, \dots, a_n \in \A$ whose differentials are linearly
  independent at $p$. Hence, it follows from Lemma~\ref{lem:iso-omega}
  that the elements $(X_{a_1})_p, \dots, (X_{a_n})_p$ are also
  linearly independent at $p$, thus showing that $E_p$ has dimension
  exactly $n$. By continuity, the chosen vector fields $X_{a_1},
  \dots, X_{a_n}$ are linearly independent in a neighborhood of $p$
  and so their values span $E_q$ for every $q$ in such neighborhood.
  Hence, $E$ is indeed an $n$-distribution and our proof shows that
  its fibers are Lagrangian.

  By Lemma~\ref{lem:Lie}, the assignment $a \mapsto X_a$ is a
  homomorphism of Lie algebras, and so the commutativity of $\A$ with
  respect to the Poisson brackets implies that the vector fields $X_a$
  commute with each other for $a \in \A$. Since the latter span the
  distribution $E$ we conclude that $E$ is involutive and, by
  Frobenius' Theorem (see \cite{Warner}), it is integrable to some
  foliation $\F$. Note that $\F$ is necessarily Lagrangian.

  It is enough to prove that every $a \in \A$ is constant along the
  leaves of $\F$. But by hypothesis we have:
  \begin{equation*}
    da(X_b) = \{a,b\} = 0,
  \end{equation*}
  for every $a,b \in \A$. Since the vector fields $X_b$ ($b \in \A$)
  define the elements of $E$ at the fiber level we conclude that for
  any given $a \in \A$ we have:
  \begin{equation*}
    da(X) = 0
  \end{equation*}
  for every vector field $X$ tangent to $\F$. This implies that every
  $a \in \A$ is constant along the leaves of $\F$.
\end{proof}

Finally, we establish the necessity of having a Lagrangian foliation
underlying to every commutative algebra of Toeplitz operators with
sufficiently rich (anti--Wick) symbols.

\begin{proof}[Proof of the Main Theorem]
  For every $h \in (0,1)$, let us denote by $\widetilde{\A}_h(\A)$ the
  algebra of Wick symbols corresponding to anti--Wick symbols $a \in
  \A$. By the first part of Theorem~\ref{thm:Ber}, the algebra
  $\widetilde{\A}_h(\A)$ is commutative. Hence, by the correspondence
  principle stated in the second part of Theorem~\ref{thm:Ber} it
  follows that $\A$ is commutative with respect to the Poisson
  brackets $\{\cdot,\cdot\}$. The result now follows from
  Theorem~\ref{thm:lag-fol}.
\end{proof}

\end{document}